\newtheorem{theorem}{Theorem}[section]
\newtheorem{lemma}[theorem]{Lemma}
\theoremstyle{definition}
\newcounter{minutes}\setcounter{minutes}{\time}
\newcounter{hours}\setcounter{hours}{\time}
\newcommand{\D}{{\mathbb D}}
\newcommand{\real}{{\operatorname{Re}\,}}
\newcommand{\ds}{\displaystyle}
\begin{document}

\bibliographystyle{amsplain}

\title[Radius of convexity of partial sums of odd functions]%
{Radius of convexity of partial sums of odd functions in the close-to-convex family}

\def\thefootnote{}
\footnotetext{ \texttt{\tiny File:~\jobname .tex,
          printed: \number\day-\number\month-\number\year,
          \thehours.\ifnum\theminutes<10{0}\fi\theminutes}
} \makeatletter\def\thefootnote{\@arabic\c@footnote}\makeatother

\author{Sarita Agrawal}
\address{Sarita Agrawal, Discipline of Mathematics,
Indian Institute of Technology Indore,
Simrol, Khandwa Road, Indore 452 020, India}
\email{saritamath44@gmail.com}

\author{Swadesh Kumar Sahoo${}^*$}
\address{Swadesh Kumar Sahoo, Discipline of Mathematics,
Indian Institute of Technology Indore,
Simrol, Khandwa Road, Indore 452 020, India}
\email{swadesh@iiti.ac.in}

\thanks{${}^*$ The corresponding author}
\begin{abstract}
We consider the class of all analytic and locally univalent functions $f$ of the form 
$f(z)=z+\sum_{n=2}^\infty a_{2n-1} z^{2n-1}$, $|z|<1$, satisfying the condition 
$$ \real\left(1+\frac{zf^{\prime\prime}(z)}{f^\prime (z)}\right)>-\frac{1}{2}.
$$
We show that every section $s_{2n-1}(z)=z+\sum_{k=2}^na_{2k-1}z^{2k-1}$, of $f$, is convex in the disk $|z|<\sqrt{2}/3$. 
We also prove that the radius $\sqrt{2}/3$ is best possible, i.e. the number $\sqrt{2}/3$ cannot be replaced by a larger one.
\\ 
\smallskip
\noindent
{\bf 2010 Mathematics Subject Classification}. Primary: 30C45; Secondary: 30C55.

\smallskip
\noindent
{\bf Key words and phrases.} 
Analytic, univalent, convex, and close-to-convex functions, odd functions, radius of convexity, partial sums or sections. 
\end{abstract}

\maketitle
\pagestyle{myheadings}
\markboth{S. Agrawal and S. K. Sahoo}{Radius of convexity of partial sums of odd functions}

\section{Introduction and Main Result}
Let $\mathcal{A}$ denote the class of all normalized analytic functions $f$ in the open unit disk 
$\D:=\{z\in\mathbb{C}:\,|z|<1\}$, i.e. $f$ has the Taylor series expansion
\begin{equation}\label{sec1-eqn1}
f(z)=z+\sum_{n=2}^\infty a_n z^n.
\end{equation}
The Taylor polynomial $s_n(z)=s_n(f)(z)$ of $f$ in $\mathcal{A}$, defined by,
$$s_n(z)=z+\sum_{k=2}^n a_k z^k
$$
is called the $n$-th {\em section/partial sum} of $f$.
Denote by $\mathcal{S}$, the class of {\em univalent} functions in $\mathcal{A}$.
A function $f\in\mathcal{A}$ is said to be {\em locally univalent}
at a point $z_0\in D\subset \mathbb{C}$ if it is univalent in some neighborhood of $z_0$;
equivalently $f'(z_0)\neq 0$. A function $f\in \mathcal{A}$ is called {\em convex} if $f(\mathbb{D})$
is a convex domain. The set of all convex functions are denoted by $\mathcal{C}$. The functions $f\in\mathcal{C}$
are characterized by the well-known fact
$${\rm Re}\left(1+\frac{zf''(z)}{f'(z)}\right)>0,\quad |z|<1.
$$
In this article, we mainly focus on a class, denoted by $\mathcal{L}$, of all 
locally univalent {\em odd} functions $f$ satisfying
\begin{equation}\label{sec1-eqn2}
\real\left(1+\frac{zf''(z)}{f'(z)}\right)>-\frac{1}{2}, \quad z\in \D.
\end{equation} 
Clearly, a function $f\in\mathcal{L}$ will have the Taylor series expansion 
$f(z)=z+\sum_{n=2}^{\infty}{a_{2n-1} z^{2n-1}}$. 
The function $f_0(z)=z/\sqrt{1-z^2}$ plays the role of an extremal function 
for $\mathcal{L}$; see for instance \cite[p.~68, Theorem~2.6i]{MM00}. 
This article is devoted to finding the largest disk $|z|<r$ in which every section 
$s_{2n-1}(z)=z+\sum_{k=2}^na_{2k-1}z^{2k-1}$, of $f\in \mathcal{L}$, is convex;
that is, $s_{2n-1}$ satisfies
$${\rm Re}\left(1+\frac{zs_{2n-1}''(z)}{s_{2n-1}'(z)}\right)>0. 
$$
Our main objective in this article is to prove

\medskip
\noindent
{\bf Main Theorem.} {\em Every section of a function in $\mathcal{L}$ is convex in the disk $|z|< \sqrt{2}/3$. 
The radius $\sqrt{2}/3$ cannot be replaced by a greater one.}

\medskip
\noindent
This observation is also explained geometrically in Figure~\ref{fig1}
by considering the third partial sum, $s_{3,0}$, of the extremal function
$f_0$. We next discuss some motivational background of our problem.

\begin{figure}[H]\label{fig1}
\begin{minipage}[b]{0.5\textwidth}
\includegraphics[width=6.7cm]{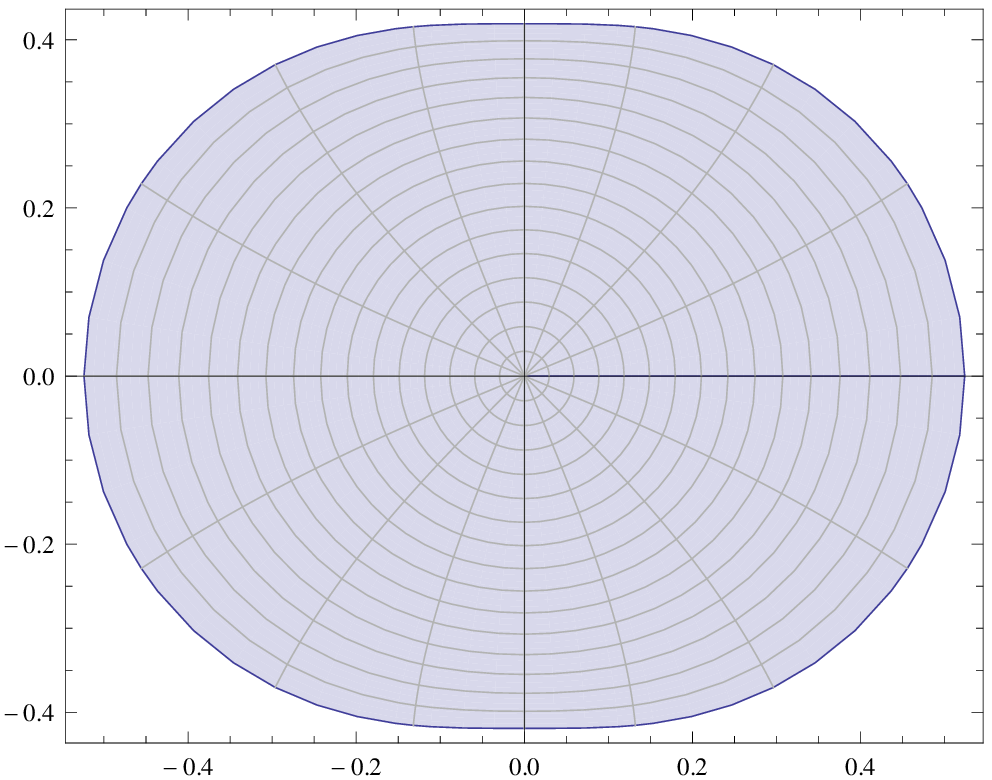}
\end{minipage}
\begin{minipage}[b]{0.45\textwidth}
\hspace*{-0.3cm}\includegraphics[height=5.3cm,width=6.7cm]{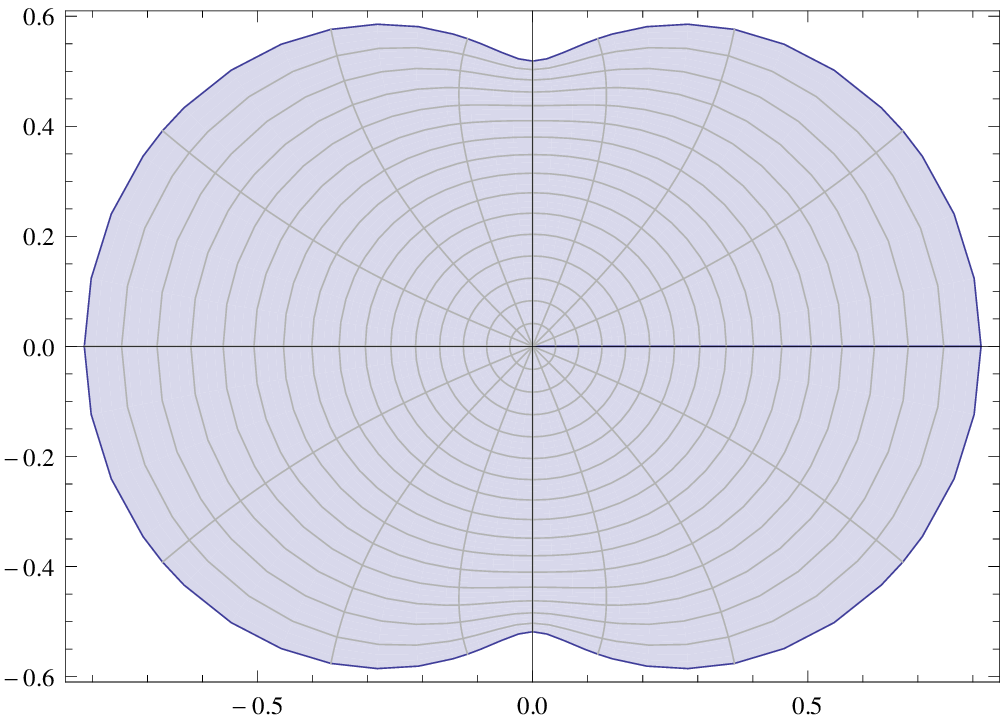}
\end{minipage}
\caption{The first figure shows convexity of the image domain $s_{3,0}(z)$ 
for $|z|<\sqrt{2}/3$ and the second figure 
shows non-convexity of the image domain $s_{3,0}(z)$ for $|z|<2/3=:r_0~(r_0>\sqrt{2}/3)$.}
\end{figure}

Considering odd univalent functions and studying classical problems 
of univalent function theory such as (successive) coefficient bounds,
inverse functions, etc. are quite interesting and found throughout the  
literature; see for instance \cite{Ke86,LZ84,Mil81,Ye05}.
In fact, an application of the Cauchy-Schwarz inequality shows that
the conjecture of Robertson: $1+|c_3|^2+|c_5|^2+\cdots+|c_{2n-1}|^2\le n$,
$n\ge 2$, for each odd function $f(z)=z+c_3z^3+c_5z^5+\cdots$ of $\mathcal{S}$,
stated in 1936 implies the well-known Bieberbach conjecture \cite{Rob36};
see also \cite{Dur77}. 
In our knowledge, studying radius properties for sections of odd univalent functions 
are new (as we do not find in the literature). 

Note that a subclass denoted by $\mathcal{F}$, of the class, $\mathcal{K}$, 
of close-to-convex functions, consisting of all locally univalent 
functions $f\in\mathcal{A}$ satisfying 
the condition (\ref{sec1-eqn2}) was considered in \cite{PSY14}. In this paper,
we consider functions from $\mathcal{F}$ that have odd Taylor coefficients.
Note that the following inclusion relations hold:
$$\mathcal{L}\subsetneq \mathcal{F}\subsetneq \mathcal{K}\subsetneq \mathcal{S}.
$$
The fact that functions in $\mathcal{F}$ are close-to-convex may be obtained
as a consequence of the result due to Kaplan (see \cite[p. 48, Theorem 2.18]{Dur83}).
In \cite{PSY14}, Ponnusamy et. al. have shown that every section of a function in the class $\mathcal{F}$ is convex in the disk $|z|<1/6$ and the radius $1/6$ is the best possible. 
They conjectured that every section of 
functions in the family $\mathcal{F}$ is univalent and close-to-convex in the disk $|z|<1/3$.
This conjecture has been recently settled by Bharanedhar and Ponnusamy in \cite[Theorem~1]{BP}.

The problem of finding the radius of univalence of sections of $f$ in $\mathcal{S}$ 
was first initiated by Szeg\"o in 1928. 
According to the Szeg\"o theorem 
\cite[Section~8.2, p. 243-246]{Dur83}, every section $s_n(z)$ of a function 
$f\in \mathcal{S}$ is univalent in the disk $|z|<1/4$; see \cite{Sze28} for the original paper.
The radius $1/4$ is best
possible and can be verified from the second partial sum of the Koebe function
$k(z)=z/(1-z)^2$.
Determining the exact (largest) radius of univalence
$r_n$ of $s_n(z)$ ($f\in\mathcal{S}$) remains an open problem.
However, many other related problems 
on sections have been solved for various geometric subclasses of $\mathcal{S}$,
eg. the classes $\mathcal{S}^*$, $\mathcal{C}$ and $\mathcal{K}$ of starlike, convex and
close-to-convex functions, respectively (see Duren \cite[\S8.2, p.241--246]{Dur83},
\cite{Goo83,Rob41,Rus72,Sma70} and the survey articles \cite{Ili79,Rav12}).
In \cite{Mac62}, MacGregor considered the class 
$$\mathcal{R}=\{f\in \mathcal{A}:\real f'(z)>0, z\in \D\}
$$
and proved that the partial sums $s_n(z)$ of $f\in \mathcal{R}$ are univalent 
in $|z|<1/2$, where the radius $1/2$ is best possible. On the other hand, in \cite{Sin70},
Ram Singh obtained the best radius, $r=1/4$, of convexity for sections of functions in the class 
$\mathcal{R}$. The reader can refer to \cite{PP74} for related information. Radius of
close-to-convexity of sections of close-to-convex functions is obtained in
\cite{Miki56}.

By the argument principle, it is clear that the $n$-th section $s_n(z)$ of an arbitrary
function in $\mathcal{S}$ is univalent in each fixed compact subdisk 
$\overline{\D_r}:=\{z\in \D:|z|\le r\}(r<1)$ of $\D$ provided that $n$ is sufficiently
large. In this way one can get univalent polynomials in $\mathcal{S}$ by setting 
$p_n(z)= \frac{1}{r}s_n(rz)$. Consequently, the set of all univalent polynomials 
is dense in the topology of locally uniformly convergence in 
$\mathcal{S}$. The radius of starlikeness of the partial sums $s_n(z)$ of $f\in\mathcal{S}^*$
was obtained by Robertson in \cite{Rob41}; (see also \cite[Theorem~2]{Sil88}) in the following form:

\medskip
\noindent
{\bf Theorem~A.} \cite{Rob41}
{\em If $f\in \mathcal{S}$ is either starlike, convex, typically-real, or convex in the direction of imaginary axis, then there
is an $N$ such that, for $n\ge N$, the partial sum $s_n(z)$ has the same property 
in $\D_r:=\{z\in \D:|z|<r\}$, where $r\ge 1-3(\log n)/n$.}

\medskip
\noindent
However, Ruscheweyh in \cite{Rus88} proved a stronger result by showing that the partial
sums $s_n(z)$ of $f$ are indeed starlike in $\D_{1/4}$ for functions $f$ belonging not only 
to $\mathcal{S}$ but also to the closed convex hull of $\mathcal{S}$. 
Robertson \cite{Rob41} further showed that sections of the Koebe function $k(z)$ are univalent
in the disk $|z|<1-3 n^{-1} \log n$ for $n\ge 5$,
and that the constant $3$ cannot be replaced by a smaller constant. However, Bshouty
and Hengartner \cite{BH91} pointed out that the Koebe function is not extremal for
the radius of univalency of the partial sums of $f\in \mathcal{S}$. A well-known
theorem by Ruscheweyh and Sheil-Small \cite{RS73} on convolution allows us to conclude immediately 
that if $f$ belongs to $\mathcal{C},\mathcal{S}^*,$ or $\mathcal{K}$, then its $n$-th section is 
respectively convex, starlike, or close-to-convex in the disk 
$|z|<1-3n^{-1} \log n$, for $n\ge 5$.
Silverman in \cite{Sil88} proved that the radius of starlikeness for sections of 
functions in the convex family $\mathcal{C}$ is $(1/2n)^{1/n}$ for all $n$. 
We suggest readers refer to \cite{PSY14,Rus72,Sma70,Sze28} and recent articles \cite{OP11,OP13,OP14,OPW13} for further interest on this topic. It is worth recalling that
radius properties of harmonic sections have recently been studied in \cite{KPV14,LS13-1,LS13-2,LS15,PS15}. 

\section{Preparatory results}
In this section we derive some useful results to prove our main theorem.
\begin{lemma}\label{sec2-lem1}
If $f(z)=z+\sum_{n=2}^{\infty}{a_{2n-1} z^{2n-1}}\in \mathcal{L}$, then the following estimates are obtained:
\begin{itemize}
\item[\bf (a)] $|a_{2n-1}|\leq \frac{(2n-2)!}{2^{2n-2}(n-1)!^2}$ for $n\ge2$. The equality holds for 
$$f_0(z)=\frac{z}{\sqrt{1-z^2}}
$$
or its rotation.

\medskip
\item[\bf (b)] $\left|\frac{zf^{\prime\prime}(z)}{f^{\prime}(z)}\right|\leq \frac{3r^2}{1-r^2}$ for $|z|=r<1$. The inequality is sharp.

\medskip
\item[\bf (c)] $\frac{1}{(1+r^2)^{3/2}}\leq |f^{\prime}(z)|\leq \frac{1}{(1-r^2)^{3/2}}$ for $|z|=r<1$. The inequality is sharp.

\medskip
\item[\bf (d)] If $f(z)=s_{2n-1}(z)+\sigma_{2n-1}(z)$, with $\sigma_{2n-1}(z)=\sum_{k=n+1}^{\infty}{a_{2k-1} z^{2k-1}}$, 
then for $|z|=r<1$ we have
$$|\sigma_{2n-1}^{\prime}(z)|\leq A(n,r)
~~\mbox{ and }~~
|z\sigma_{2n-1}^{\prime\prime}(z)|\leq B(n,r),
$$
where
$$A(n,r)=\sum_{k=n+1}^{\infty}\frac{(2k-1)!}{2^{2k-2}(k-1)!^2}r^{2k-2}
~~\mbox{ and }~~
B(n,r)=\sum_{k=n+1}^{\infty}\frac{(2k-2)(2k-1)!}{2^{2k-2}(k-1)!^2}r^{2k-2}.
$$
The ratio test guarantees that both the series are convergent.
\end{itemize}
\end{lemma}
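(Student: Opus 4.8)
The plan is to distil the single defining inequality (\ref{sec1-eqn2}) into one normalized Carath\'eodory function and to read off all four estimates from it. Since $f$ is odd, $f'$ is even and $f''$ is odd, so $zf''(z)/f'(z)$ is an even analytic function; hence $P(z):=1+zf''(z)/f'(z)$ satisfies $P(-z)=P(z)$ and $P(0)=1$. I would set
$$p(z)=\frac23\left(P(z)+\frac12\right)=1+\frac23\,\frac{zf''(z)}{f'(z)},$$
so that (\ref{sec1-eqn2}) becomes exactly $\real p(z)>0$; thus $p$ is a normalized Carath\'eodory function which is moreover \emph{even}, say $p(z)=1+\sum_{j\ge1}p_{2j}z^{2j}$, carrying the classical bounds $|p_{2j}|\le2$. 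Writing $p(z)=P_1(z^2)$ with $P_1$ a Carath\'eodory function of $\zeta=z^2$, every estimate below comes from substituting $|\zeta|=r^2$ into the standard pointwise bounds for $P_1$, together with the identity $zf''(z)/f'(z)=\tfrac32\bigl(p(z)-1\bigr)$.

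For part (b) I would use the Schwarz-function representation $P_1(\zeta)=\bigl(1+\omega(\zeta)\bigr)/\bigl(1-\omega(\zeta)\bigr)$ with $|\omega(\zeta)|\le|\zeta|$, which gives $|p(z)-1|=|P_1(z^2)-1|\le 2r^2/(1-r^2)$ and hence $|zf''/f'|=\tfrac32|p-1|\le 3r^2/(1-r^2)$. For part (c) I would instead bound the real part: the classical inequalities $\tfrac{1-r^2}{1+r^2}\le\real P_1(z^2)\le\tfrac{1+r^2}{1-r^2}$ yield, through $\real(zf''/f')=\tfrac32(\real p-1)$, the two-sided estimate
$$-\frac{3r^2}{1+r^2}\le \real\frac{zf''(z)}{f'(z)}\le\frac{3r^2}{1-r^2}.$$
Since $\frac{\partial}{\partial r}\log|f'(re^{i\theta})|=\frac1r\,\real\bigl(zf''(z)/f'(z)\bigr)$, integrating these bounds in $r$ from $0$ to $r$ (using $|f'(0)|=1$) produces $-\tfrac32\log(1+r^2)\le\log|f'(z)|\le-\tfrac32\log(1-r^2)$, which is (c). Sharpness in both parts is exhibited by $f_0$, for which $zf_0''/f_0'=3z^2/(1-z^2)$ and $f_0'(z)=(1-z^2)^{-3/2}$; the upper bounds are attained at $z=r$ and the lower bound of (c) at $z=ir$.

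The heart of the lemma is part (a), which I expect to be the main obstacle, and here the oddness is genuinely exploited: because only the \emph{even} coefficients $p_{2j}$ occur, the recursion for the Taylor coefficients is sparse, and this is exactly what yields the smaller, sharp bound (one cannot obtain it by merely restricting the coefficient estimates known for the larger family $\mathcal{F}$). Setting $h(z)=zf'(z)=z+\sum_{n\ge2}b_{2n-1}z^{2n-1}$, one has $zh'(z)/h(z)=1+zf''(z)/f'(z)=\tfrac12(3p(z)-1)$, and comparing coefficients of $z^{2n-1}$ in $zh'=\tfrac12(3p-1)h$ gives
$$(2n-2)\,b_{2n-1}=\frac32\sum_{j=1}^{n-1}p_{2j}\,b_{2(n-j)-1}.$$
With $b_1=1$ and $|p_{2j}|\le2$ this leads to $|b_{2n-1}|\le\frac{3}{2n-2}\sum_{k=1}^{n-1}|b_{2k-1}|$. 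Denoting by $N_n=(2n-1)!/\bigl(2^{2n-2}(n-1)!^2\bigr)$ the modulus of the corresponding coefficient of $h_0=zf_0'$, a short computation shows $N_n/N_{n-1}=(2n-1)/(2n-2)$, so that $N_n$ satisfies the displayed recursive inequality with equality; induction on $n$ then gives $|b_{2n-1}|\le N_n$, and dividing by $2n-1$ yields $|a_{2n-1}|\le (2n-2)!/\bigl(2^{2n-2}(n-1)!^2\bigr)$, with equality forced precisely when $p(z)=(1+z^2)/(1-z^2)$, i.e. for $f=f_0$ (or a rotation). Finally part (d) is immediate from (a): substituting the bound of (a) into the termwise triangle inequalities for $\sigma_{2n-1}'$ and $z\sigma_{2n-1}''$, and simplifying $(2k-1)\frac{(2k-2)!}{2^{2k-2}(k-1)!^2}=\frac{(2k-1)!}{2^{2k-2}(k-1)!^2}$, produces exactly $A(n,r)$ and $B(n,r)$, whose convergence follows from the ratio test as stated.
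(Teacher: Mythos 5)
Your proof is correct and, for parts (a), (b) and (d), essentially identical to the paper's: the same Carath\'eodory function $p=1+\tfrac23\, zf''/f'$ with $|p_{2j}|\le 2$, the same coefficient recursion (yours is stated for $b_{2n-1}=(2n-1)a_{2n-1}$, the coefficients of $zf'$, which is just the paper's recursion divided through by $2n-1$; this reparametrization tidies the induction, whose key identity $3\sum_{k=1}^{n-1}N_k=(2n-2)N_n$ is exactly the summation identity the paper proves by a second induction), and the same termwise triangle inequalities for (d). The only genuine divergence is part (c): the paper invokes Suffridge's subordination theorem to obtain $f'\prec(1-z^2)^{-3/2}$ and reads off both bounds, whereas you derive the two-sided estimate $-3r^2/(1+r^2)\le\real(zf''/f')\le 3r^2/(1-r^2)$ from the classical Carath\'eodory real-part bounds for $P_1(z^2)$ and integrate $\frac{\partial}{\partial r}\log|f'(re^{i\theta})|$ along radii; your route is marginally more elementary (no appeal to Suffridge's result) at the cost of a short integration, and both are standard. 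One small overstatement: your parenthetical remark that equality in (a) is ``forced precisely'' when $p(z)=(1+z^2)/(1-z^2)$ is asserted rather than proved, but it is inessential since the lemma only claims that $f_0$ (or a rotation) attains the bound, which your computation of the coefficients of $f_0$ does establish.
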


\begin{proof}
{\bf (a)} Set 
\begin{equation}\label{Neweqn}
p(z)=1+\ds\frac{2}{3}\left(\frac{zf^{\prime\prime}(z)}{f^{\prime}(z)}\right).
\end{equation}
Clearly, $p(z)=1+\sum_{n=1}^{\infty}{p_n z^n}$ is analytic in $\D$ and $\real p(z)>0$ there. 
So, by Carath{\'e}odory Lemma, we obtain that $|p_n|\leq 2$ for all $n \ge 1$. 
Putting the series expansions for $f'(z),~f''(z)$ and $p(z)$ in (\ref{Neweqn}) we get
$$\sum_{n=2}^{\infty}{(2n-1)(2n-2)a_{2n-1}z^{2n-1}}
=\frac{3}{2}\sum_{n=2}^{\infty}\left(\sum_{k=1}^{n-1} p_{2k-1}(2n-2k-1)a_{2n-2k-1}\right)z^{2n-2}
$$
$$+\frac{3}{2}\sum_{n=2}^{\infty}\left(\sum_{k=1}^{n-1} p_{2k}(2n-2k-1)a_{2n-2k-1}\right)z^{2n-1}.
$$
Equating the coefficients of $z^{2n-1}$ and $z^{2n-2}$ on both sides, we obtain
$$\sum_{k=1}^{n-1} p_{2k-1}(2n-2k-1)a_{2n-2k-1}=0
$$
and
\begin{equation}\label{sec2-eqn0}
(2n-1)(2n-2)a_{2n-1}=\frac{3}{2}\sum_{k=1}^{n-1} p_{2k}(2n-2k-1)a_{2n-2k-1},
\quad \mbox{ for all } n\ge 2.
\end{equation}
Hence,
\begin{equation}\label{sec2-eq1}
|a_{2n-1}|\leq\frac{3}{(2n-1)(2n-2)}\sum_{k=1}^{n-1}(2k-1)|a_{2k-1}|.
\end{equation}
For $n=2$, we can easily see that $|a_3|\leq 1/2$, and for $n=3$, we have
$$|a_5|\leq \frac{3}{20}(1+3|a_3|)\leq \frac{3}{8}.
$$
Now, we can complete the proof by method of induction. Therefore, if we assume $|a_{2k-1}|\leq \frac{(2k-2)!}{2^{2k-2}(k-1)!^2}$ for $k=2, 3, \ldots , n-1$, then we deduce 
from (\ref{sec2-eq1}) that
$$|a_{2n-1}|\leq\frac{3}{(2n-1)(2n-2)}\sum_{k=1}^{n-1} {\frac{(2k-1)!}{2^{2k-2}(k-1)!^2}}.
$$
Induction principle tells us to show that 
$$|a_{2n-1}|\leq \frac{(2n-2)!}{2^{2n-2}(n-1)!^2}.
$$
It suffices to show that
$$\frac{3}{(2n-1)(2n-2)}\sum_{k=1}^{n-1} {\frac{(2k-1)!}{2^{2k-2}(k-1)!^2}}=\frac{(2n-2)!}{2^{2n-2}(n-1)!^2}
$$
or,
$$\sum_{k=1}^{n-1} {\frac{3(2k-1)!}{2^{2k-2}(k-1)!^2}}=\frac{(2n-2)(2n-1)!}{2^{2n-2}(n-1)!^2}.
$$
Again, we prove this by method of induction. It can easily be seen that for $k=1$ it is true. 
Assume that it is true for $k=2, 3, \ldots , n-1$, then we have to prove that
$$\sum_{k=1}^{n} {\frac{3(2k-1)!}{2^{2k-2}(k-1)!^2}}=\frac{(2n)(2n+1)!}{2^{2n}(n)!^2},
$$
which is easy to see, since
$$\sum_{k=1}^{n} {\frac{3(2k-1)!}{2^{2k-2}(k-1)!^2}}=\frac{(2n-2)(2n-1)!}{2^{2n-2}(n-1)!^2}+\frac{3(2n-1)!}{2^{2n-2}(n-1)!^2}=\frac{(2n)(2n+1)!}{2^{2n}(n)!^2}.
$$
Hence, the proof is complete. For equality, it can easily be seen that
$$ f_0(z)=\frac{z}{\sqrt{1-z^2}}=z+\sum_{n=2}^{\infty} \frac{(2n-2)!}{2^{2n-2}(n-1)!^2}
z^{2n-1}
$$
belongs to $\mathcal{L}$. 

The image of the unit disk $\D$ under $f_0$ is
shown in Figure~\ref{sec2-fig1} which indicates that $f_0(\D)$ is not convex.

\begin{figure}[H]
\includegraphics[height=10cm,width=7cm]{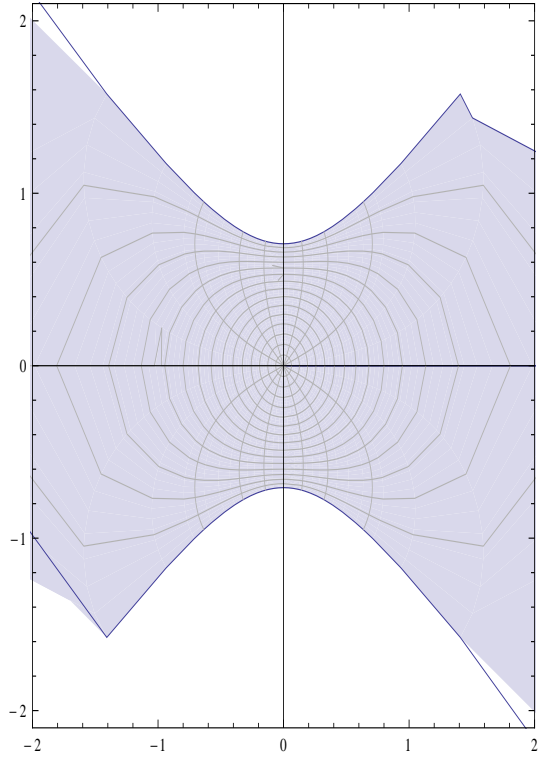}
\caption{The image domain $f_0(\D)$, where $f_0(z)=\frac{z}{\sqrt{1-z^2}}$.}\label{sec2-fig1}
\end{figure}

{\bf (b)} We see from the definition of $\mathcal{L}$ that
$$1+\frac{zf^{\prime\prime}(z)}{f^\prime (z)}\prec \frac{1+2z^2}{1-z^2},\quad 
\mbox{i.e.}, \frac{zf^{\prime\prime}(z)}{f^\prime (z)}\prec \frac{3z^2}{1-z^2}=:h(z),
$$
where $\prec$ denotes the usual subordination. The poof of (b) now follows easily.

{\bf (c)} Since 
$$ \frac{zf^{\prime\prime}(z)}{f^\prime (z)}\prec h(z),
$$
it follows by the well-known subordination result due to Suffridge \cite{Suf70} that
$$f^{\prime}(z) \prec \exp\left(\int_0 ^ z {\frac{h(t)}{t}\mbox{d}t}\right)
=\exp\left(3\int_0^z {\frac{t}{1-t^2}\mbox{d}t}\right)=\frac{1}{(1-z^2)^{3/2}}.
$$
Hence, the proof of (c) follows.

{\bf (d)} By $(a)$, we see that
$$|\sigma_{2n-1}^{\prime}(z)|\leq \sum_{k=n+1}^{\infty}(2k-1)|a_{2k-1}|r^{2k-2}
\leq A(n,r).
$$
and
$$|z\sigma_{2n-1}^{\prime\prime}(z)|\leq \sum_{k=n+1}^{\infty}(2k-1)(2k-2)|a_{2k-1}|r^{2k-2}\leq B(n,r).
$$
The proof of our lemma is complete.
\end{proof}

\section{Proof of the Main Theorem}
For an arbitrary $f(z)=z+\sum_{n=2}^{\infty}{a_{2n-1} z^{2n-1}}\in \mathcal{L}$, we first consider its third section $s_3(z)=z+a_3z^3$ of $f$. Simple computation shows
$$1+\frac{zs_3 ^{\prime\prime}(z)}{s_3^{\prime}(z)}=1+\frac{6a_3z^2}{1+3a_3z^2}.
$$
By using Lemma~\ref{sec2-lem1}(a), we have $|a_3|\le 1/2$ and hence
$$\real\left(1+\frac{zs_3 ^{\prime\prime}(z)}{s_3^{\prime}(z)}\right)\ge 1-\frac{6|a_3||z|^2}{1-3|a_3||z|^2} \ge 1-\frac{3|z|^2}{1-\frac{3}{2}|z|^2}
$$
which is positive for $|z|<\sqrt{2}/3$. Thus, $s_3(z)$ is convex in the disk $|z|<\sqrt{2}/3$. To show that the constant $\sqrt{2}/3$ is best possible, we consider the function $f_0(z)$ defined by
$$f_0(z)=\frac{z}{\sqrt{1-z^2}}.
$$
We denote by $s_{3,0}(z)$, the third partial sum $s_3(f_0)(z)$ of $f_0(z)$ so that $s_{3,0}(z)=z+(1/2)z^3$ and hence, we find 
$$1+\frac{zs_{3,0} ^{\prime\prime}(z)}{s_{3,0}^{\prime}(z)}=\frac{2+9z^2}{2+3z^2}.
$$
This shows that
$$\real\left(1+\frac{zs_{3,0} ^{\prime\prime}(z)}{s_{3,0}^{\prime}(z)}\right)=0
$$
when $z^2=(-2/9) \mbox{ or } (-2/3)$ \quad i.e., when $|z|^2=(2/9) \mbox{ or } (2/3)$.
Hence, the equality occurs.

Next, let us consider the case $n=3$.
Our aim in this case is to show that
$$\real\left(1+\frac{zs_5^{\prime\prime}(z)}{s_5^{\prime}(z)}\right)=\real\left(\frac{1+9a_3 z^2+25 a_5 z^4}{1+3a_3 z^2+5a_5 z^4}\right)>0
$$
for $|z|<\sqrt{2}/3$. Since the real part $\real[(1+9a_3 z^2+25 a_5 z^4)/(1+3a_3 z^2+5a_5 z^4)]$ is harmonic in $|z|\leq \sqrt{2}/3$, it suffices to check that
$$\real\left(\frac{1+9a_3 z^2+25 a_5 z^4}{1+3a_3 z^2+5a_5 z^4}\right)>0
$$
for $|z|=\sqrt{2}/3$. Also we see that
$$\real\left(\frac{1+9a_3 z^2+25 a_5 z^4}{1+3a_3 z^2+5a_5 z^4}\right)=3-\real\left(\frac{2-10 a_5 z^4}{1+3a_3 z^2+5a_5 z^4}\right)\ge 3-\left|\frac{2-10 a_5 z^4}{1+3a_3 z^2+5a_5 z^4}\right|
$$
and, so by considering a suitable rotation of $f(z)$, the proof reduces to $z=\sqrt{2}/3$; this means that it is enough to prove
$$\frac{3}{2}> \left|\frac{81-20 a_5}{81+54a_3 +20a_5}\right|.
$$
From (\ref{sec2-eqn0}), we have
$$ a_3=\frac{p_2}{4} \quad \mbox{and}\quad a_5=\left(\frac{3}{40}\right)\left(\frac{3}{4} p_2^2+p_4\right).
$$
Since $|p_2|\le 2$ and $|p_4|\le 2$, it is convenient to rewrite the last two relations as
$$a_3=\frac{\alpha}{2} \quad \mbox{and} \quad a_5=\frac{3}{40}(3\alpha^2+2\beta)
$$
for some $|\alpha|\le 1$ and $|\beta|\le 1$.

Substituting the values for $a_3$ and $a_5$, and applying the maximum principle in 
the last inequality, it suffices to show the inequality
$$\frac{3}{2}\left|81+27\alpha+\frac{9\alpha^2}{2}+3\beta\right|> \left|81-\frac{9\alpha^2}{2}-3\beta\right|
$$
for $|\alpha|=1=|\beta|$. Finally, by the triangle inequality, the last inequality follows if we can show that
$$9\left|9+3\alpha+\frac{\alpha^2}{2}\right|-6\left|9-\frac{\alpha^2}{2}\right|>5
$$
which is easily seen to be equivalent to
$$9\left|9\overline{\alpha}+3+\frac{\alpha}{2}\right|-6\left|9\overline{\alpha}-\frac{\alpha}{2}\right|>5
$$
as $|\alpha|=1$. Write $\real (\alpha)=x$. It remains to show that
$$T(x):=9\sqrt{18x^2+57x+\frac{325}{4}}-6\sqrt{\frac{361}{4}-18x^2}> 5
$$
for $-1\leq x\leq1$.
\begin{figure}[H]
\includegraphics{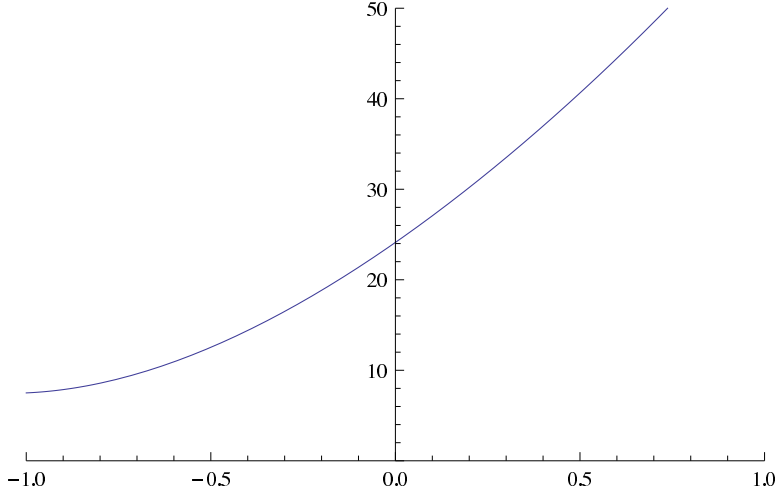}
\caption{Graph of $T(x)$.}
\end{figure}
It suffices to show
$$9\sqrt{18x^2+57x+\frac{325}{4}}> 5+6\sqrt{\frac{361}{4}-18x^2}.
$$
Squaring both sides we have
$$ 2106 x^2+4617x+\frac{13229}{4} > 60\left(\sqrt{\frac{361}{4}-18x^2}\right).
$$
Again by squaring both sides we have
$$ \left(2106 x^2+4617x+\frac{13229}{4}\right)^2 > 3600\left(\frac{361}{4}-18x^2\right).
$$
After computing, it remains to show that $\phi(x)>0$, where 
$$\phi(x)=ax^4+bx^3+cx^2+dx+e
$$
and the coefficients are
$$a=4435236, b=19446804, c=35311626, d=30539146.5, e=10613002.5625. 
$$
Here we see that $\phi^{iv}(x)=24a>0$. Thus the function $\phi^{\prime\prime\prime}(x)$
is increasing in $-1\le x\le 1$ and hence $\phi^{\prime\prime\prime}(x)\ge \phi^{\prime\prime\prime}(-1)=10235160>0$. This implies $\phi^{\prime\prime}(x)$ is increasing. Hence $\phi^{\prime\prime}(x)\ge \phi^{\prime\prime}(-1)=7165260>0$.
Consequently, $\phi^{\prime}(x)$ is increasing and we have $\phi^{\prime}(x)\ge \phi^{\prime}(-1)=515362.5>0$. Finally we get, $\phi(x)$ is increasing and hence we have $\phi(x)>\phi(-1)=373914.0625>0$.
This completes the proof for $n=3$.

We next consider the general case $n\ge 4$. It suffices to show that 
$$\real\left(1+\frac{zs_{2n-1}^{\prime\prime}}{s_{2n-1}^\prime}\right)>0 \quad \mbox{for} \quad |z|=r
$$
with $r=\sqrt{2}/3$ for all $n\ge 4$. From the maximum modulus principle, we shall then conclude that the last inequality holds for all $n\ge 4$
$$\real\left(1+\frac{zs_{2n-1}^{\prime\prime}}{s_{2n-1}^\prime}\right)>0
$$
for $|z|<\sqrt{2}/3$. In other words, it remains to find the largest $r$ so that 
the last inequality holds for all $n\ge 4$. 

By the same setting of $f(z)$ as in Lemma~\ref{sec2-lem1}(d),
it follows easily that
$$1+\frac{zs_{2n-1}^{\prime\prime}}{s_{2n-1}^\prime}=1+\frac{z(f^{\prime\prime}(z)-\sigma_{2n-1}^{\prime\prime}(z))}{f^{\prime}(z)-\sigma_{2n-1}^{\prime}(z)}=1+\frac{zf^{\prime\prime}(z)}{f^{\prime}(z)}+\frac{\frac{zf^{\prime\prime}(z)}{f^{\prime}(z)}\sigma_{2n-1}^{\prime}(z)-z\sigma_{2n-1}^{\prime\prime}(z)}{f^{\prime}(z)-\sigma_{2n-1}^{\prime}(z)}
$$
or, 
$$\real\left(1+\frac{zs_{2n-1}^{\prime\prime}}{s_{2n-1}^\prime}\right)\ge 1-\left|\frac{zf^{\prime\prime}(z)}{f^{\prime}(z)}\right|-\frac{\left|\frac{zf^{\prime\prime}(z)}{f^{\prime}(z)}\right||\sigma_{2n-1}^{\prime}(z)|+|z\sigma_{2n-1}^{\prime\prime}(z)|}{|f^{\prime}(z)|-|\sigma_{2n-1}^{\prime}(z)|}.
$$
Then by using Lemma~\ref{sec2-lem1}, we obtain
$$\real\left(1+\frac{zs_{2n-1}^{\prime\prime}}{s_{2n-1}^\prime}\right)\ge 1-\frac{3r^2}{1-r^2}-\frac{\left(\frac{3r^2}{1-r^2}\right)A(n,r)+B(n,r)}{\frac{1}{(1+r^2)^{(3/2)}}-A(n,r)}.
$$
Thus, we conclude that
$$\real\left(1+\frac{zs_{2n-1}^{\prime\prime}}{s_{2n-1}^\prime}\right)>0
$$
provided
$$\frac{1-4r^2}{1-r^2}-\frac{(1+r^2)^{3/2}}{1-r^2}\left(\frac{3r^2 A(n,r)+(1-r^2)B(n,r)}{1-(1+r^2)^{3/2}A(n,r)}\right)>0,
$$
or, equivalently
$$(1+r^2)^{3/2}\left(\frac{3r^2 A(n,r)+(1-r^2)B(n,r)}{1-(1+r^2)^{3/2}A(n,r)}\right)<1-4r^2.
$$
We show that the above relation holds for all $n\ge 4$ with $r=\sqrt{2}/3$.
The choice $r=\sqrt{2}/3$ brings the last inequality to the form
$$\left(\frac{11}{9}\right)^{3/2}\left(\frac{\frac{2}{3} A(n,\frac{\sqrt{2}}{3})+\frac{7}{9}B(n,
\frac{\sqrt{2}}{3})}{1-(\frac{11}{9})^{3/2}A(n,\frac{\sqrt{2}}{3})}\right)< \frac{1}{9}.
$$
Set
$$ C\left(n,\frac{\sqrt{2}}{3}\right):=1-\left(\frac{11}{9}\right)^{3/2}A\left(n,\frac{\sqrt{2}}{3}\right).
$$
We shall prove that $C\left(n,\frac{\sqrt{2}}{3}\right)>0$ for $n\ge 4$ i.e., $$A\left(n,\frac{\sqrt{2}}{3}\right)<\frac{27}{(11)^{3/2}}$$
and
$$A\left(n,\frac{\sqrt{2}}{3}\right)+B\left(n,\frac{\sqrt{2}}{3}\right)<\frac{27}{7\times (11)^{3/2}}\quad \mbox{ for } n\ge 4.
$$
If the last inequality is proved, then automatically the previous one follows. 
Hence, it is enough to prove the last inequality. Now,
\begin{eqnarray*}
A(n,r)+B(n,r) &=& \sum_{k=n+1}^{\infty}\frac{(2k-1)(2k-1)!}{2^{2k-2}(k-1)!^2}(r^2)^{k-1}\\ &\le & \sum_{k=5}^{\infty}\frac{(2k-1)(2k-1)!}{2^{2k-2}(k-1)!^2}(r^2)^{k-1}\\ 
&= & \sum_{k=1}^{\infty}\frac{(2k-1)(2k-1)!}{2^{2k-2}(k-1)!^2}(r^2)^{k-1} 
- \sum_{k=1}^{4}\frac{(2k-1)(2k-1)!}{2^{2k-2}(k-1)!^2}(r^2)^{k-1}\\
&=& \frac{1+2 r^2}{(1-r^2)^{5/2}}-\left(1+\frac{9}{2}r^2+\frac{75}{8}r^4+
\frac{245}{16}r^6 \right).
\end{eqnarray*}
Substituting the value $r=\sqrt{2}/3$, we obtain 
$$A\left(n,\frac{\sqrt{2}}{3}\right)+B\left(n,\frac{\sqrt{2}}{3}\right) \le 0.076\cdots < 0.105\cdots = \frac{27}{7\times (11)^{3/2}}.
$$
This completes the proof of our main theorem.
\hfill{$\Box$}

\vskip 1cm
\noindent
{\bf Acknowledgement.} The work of the first author is supported by University
Grants Commission, New Delhi (grant no. F.2-39/2011 (SA-I)).


\begin{thebibliography}{99}

\bibitem{BP} {V. Bharanedhar and S. Ponnusamy,}
Uniform close-to-convexity radius of sections of functions in the close-to-convex
family, {\em J. Ramanujan Math. Soc.}, {\bf 29} (3) (2014), 243--251.

\bibitem{BH91} {D. Bshouty and W. Hengartner,}
Criteria for the extremality of the Koebe mapping,
{\em Proc. Amer. Math. Soc.,} {\bf 111} (1991), 403--411.

\bibitem{Dur77}  {P. L.~Duren,}
Coefficients of univalent functions,
{\em Bull. Amer. Math. Soc.}, {\bf 83} (5) (1977), 891--911.

\bibitem{Dur83}  {P. L.~Duren,}
\emph{Univalent Functions}, Springer-Verlag, New York, 1983.

\bibitem{Goo83}{A. W. Goodman,}
\emph{Univalent Functions}, Vol-I, Mariner Publishing Company, 1983.

\bibitem{Ili79} {L. Iliev,}
Classical extremal problems for univalent functions, In: Complex Analysis
(Warsaw 1979), {\em Banach Center Publ.,} 11 PWN,
Warsaw, 89--110.

\bibitem{KPV14} {D.~Kalaj, S.~Ponnusamy, and M.~Vuorinen},
{Radius of close-to-convexity of harmonic functions},
{\em Complex Var. Elliptic Equ.}, {\bf 59} (4) (2014), 539--552.

\bibitem{Ke86}{Hu Ke},
Coefficients of odd univalent functions,
{\em Proc. Amer. Math. Soc.}, {\bf 96} (1) (1986), 183--186.

\bibitem{LS13-1} {L.~Li and S.~Ponnusamy},
{Injectivity of sections of univalent harmonic mappings,}
{\em Nonlinear Analysis} \textbf{89} (2013), 276--283.

\bibitem{LS13-2} {L.~Li and S.~Ponnusamy},
{Disk of convexity of sections of univalent harmonic functions,}
{\em J. Math. Anal. and Appl.}, {\bf 408} (2) (2013), 589--596.

\bibitem{LS15} {L.~Li and S.~Ponnusamy},
{Injectivity of sections of convex harmonic mappings,}
{\em Czechoslovak Math. J.}, to appear.

\bibitem{LZ84}{R. J. Libera and E. J. Zlotkiewicz},
Coefficient bounds for inverses of odd univalent functions,
{\em Complex Variables,} {\bf 3} (1984), 185--189.

\bibitem{Mac62} {T. H. MacGregor,}
Functions whose derivative has a positive real part,
{\em Trans. Amer. Math. Soc.,} {\bf 104} (1962), 532--537.

\bibitem{Miki56} {Y. Miki,}
A note on close-to-convex functions,
{\em J. Math. Soc. Japan}, {\bf 8} (3) (1956), 256--268.

\bibitem{Mil81} {V. I. Milin,}
Adjacent coefficients of odd univalent functions,
{\em Sib. Math. J.}, {\bf 22} (2) (1981), 283--290.

\bibitem{MM00} {S. S. Miller and P. T. Mocanu},
{\em Differential Subordinations, Theory and Applications,}
Marcel Dekker, USA, 2000.

\bibitem{OP11} {M. Obradovi\'c and S. Ponnusamy,}
Partial sums and radius problem for some class of conformal mappings,
{\em Sib. Math. J.,} {\bf 52} (2) (2011), 291--302.

\bibitem{OP13} {M. Obradovi\'c and S. Ponnusamy,}
Injectivity and starlikeness of sections of a class of univalent functions,
In: Complex analysis and dynamical systems V, {\em Contemp. Math.,} {\bf 591} (2013),
Amer. Math. Soc., Providence, RI, 195--203.

\bibitem{OP14} {M. Obradovi\'c and S. Ponnusamy,}
Starlikeness of sections of univalent functions,
{\em Rocky Mountain J. Math.,} {\bf 44} (3) (2014), 1003--1014.

\bibitem{OPW13} {M. Obradovi\'c, S. Ponnusamy, and K.-J. Wirths,}
Coefficient characterizations and sections for some univalent functions,
{\em Sib. Math. J.,} {\bf 54} (1) (2013), 679--696.

 
\bibitem{PP74} {K. S. Padmanabhan and R. Parvatham,}
On the univalence and convexity of partial sums of a certain class of analytic
functions whose derivatives have positive real part, 
{\em Indian J. Math.,} {\bf 16} (1974), 67--77.

\bibitem{PSY14} {S. Ponnusamy, S. K. Sahoo, and H. Yanagihara,}
Radius of convexity of partial sums of functions in the close-to-convex family,
{\em Nonlinear Analysis,} {\bf 95} (2014), 219--228.

\bibitem{PS15}
{S.~Ponnusamy and  A.~Sairam Kaliraj},
{Constants and characterization for certain classes of univalent harmonic mappings},
{\em Mediterr. J. Math.}, to appear.

\bibitem{Rav12} {V. Ravichandran},
Geometric properties of partial sums of univalent functions,
{\em Mathematics Newsletter}, {\bf 22} (3) (2012), 208--221.

\bibitem{Rob36} {M. S. Robertson,} 
A remark on the odd schlicht functions, 
{\em Bull. Amer. Math. Soc.}, {\bf 42} (1936), 366--370.

\bibitem{Rob41} {M. S. Robertson,} 
The partial sums of multivalently star-like functions,
{\em Ann. of Math.,} {\bf 42} (1941), 829--838.

\bibitem{Rus72} {St. Ruscheweyh,} 
On the radius of univalence of the partial sums of convex functions, 
{\em Bull. Lond. Math. Soc.,} {\bf 4} (1972), 367--369.

\bibitem{RS73} {St. Ruscheweyh and T. Sheil-Small,}
Hadamard products of Schlicht functions and the P\'olya-Schoenberg conjecture,
{\em Comment. Math. Helv.,} {\bf 48} (1973), 119--135.

\bibitem{Rus88} {St. Ruscheweyh,}
Extension of Szeg\" o's theorem on the sections of univalent functions, 
{\em SIAM J. Math. Anal.,} {\bf 19} (6) (1988), 1442--1449.

\bibitem{Sin70} {R. Singh,}
Radius of convexity of partial sums of a certain power series,
{\em J. Aust. Math. Soc.,} {\bf 11} (4) (1970), 407--410.

\bibitem{Sil88} {H. Silverman,}
Radii problems for sections of convex functions,
{\em Proc. Amer. Math. Soc.,} {\bf 104} (4) (1988), 1191--1196.  

\bibitem{Sma70} {T. Sheil-Small,}
A note on the partial sums of convex schlicht functions,
{\em Bull. Lond. Math. Soc.,} {\bf 2} (1970), 165--168.

\bibitem{Suf70} {T. J. Suffridge,}
Some remarks on convex maps of the unit disk,
{\em Duke Math. J.,} {\bf 37} (1970), 775--777.

\bibitem{Sze28} {G. Szeg\"o,}
Zur Theorie der schlichten Abbildungen,
{\em Math. Ann.,} {\bf 100} (1) (1928), 188--211.

\bibitem{Ye05}{Z. Ye},
On successive coefficients of odd univalent functions,
{\em Proc. Amer. Math. Soc.}, {\bf 133} (11) (2005), 3355--3360.
\end{thebibliography}
\end{document}